\renewcommand{\Im}{\operatorname{Im}}
\renewcommand{\Re}{\operatorname{Re}}
\newtheorem{theorem}{Theorem}
\newtheorem{lemma}{Lemma}
\newtheorem*{corollary}{Corollary}
\newcommand{\diag}{\operatorname{diag}}
\begin{document}

\title[]{Refined Heinz-Kato-L\"owner inequalities}
\keywords{Cauchy-Schwarz inequality, L\"owner-Heinz inequality, Heinz-Kato inequality, Cordes inequality, McIntosh inequality.}
\subjclass[2010]{15A58 (primary), 47A30 and 47A63 (secondary)}

\author[]{Stefan Steinerberger}
\address[Stefan Steinerberger]{Department of Mathematics, Yale University, New Haven, CT 06510, USA}
\email{stefan.steinerberger@yale.edu}

\begin{abstract} A version of the Cauchy-Schwarz inequality in operator theory is the following: for any two symmetric, positive definite matrices $A,B \in \mathbb{R}^{n \times n}$ and
 arbitrary $X \in \mathbb{R}^{n \times n}$
$$ \|AXB\| \leq \|A^2 X\|^{\frac{1}{2}} \|X B^2\|^{\frac{1}{2}}.$$
This inequality is classical  and equivalent to the celebrated Heinz-L\"owner, Heinz-Kato and Cordes inequalities. We characterize cases of equality: in particular, after factoring out the symmetry coming from multiplication with
scalars $ \|A^2 X\| = 1 = \|X B^2\|$, the case of equality requires that $A$ and $B$ have a common eigenvalue $\lambda_i = \mu_j$. We also derive
improved estimates and show that if either $\lambda_i \lambda_j = \mu_k^2$ or $\lambda_i^2 = \mu_j \mu_k$ does not have a solution, i.e. if $d > 0$ where
\begin{align*}
d &= \min_{1 \leq i,j,k \leq n} \left\{  \left| \log{ \lambda_i} + \log{ \lambda_j} - 2\log{ \mu_k} \right|:\lambda_i, \lambda_j \in \sigma(A), \mu_k \in \sigma(B)    \right\} \\
&+\min_{1 \leq i,j,k \leq n}  \left\{  \left| 2\log{\lambda_i} - \log{ \mu_j} - \log{\mu_k } \right|:\lambda_i \in \sigma(A), \mu_j, \mu_k \in \sigma(B)    \right\},
\end{align*}
then there is an improved inequality
$$ \|AXB\| \leq (1 - c_{n,d})\|A^2 X\|^{\frac{1}{2}} \|X B^2\|^{\frac{1}{2}}$$
for some $c_{n,d} > 0$ that only depends only on $n$ and $d$. We obtain similar results for the McIntosh inequality and the Cordes inequality and 
expect the method to have many further applications.
\end{abstract}

\maketitle

\section{Introduction and statement of results}
\subsection{Introduction} We study the inequality
$$ \|AXB\| \leq \|A^2 X\|^{\frac{1}{2}} \|X B^2\|^{\frac{1}{2}},$$
where $A,B \in \mathbb{R}^{n \times n}$ are symmetric and positive definite, $X\in \mathbb{R}^{n \times n}$ is completely arbitrary and the norm is given by the operator norm
$$ \|A\| = \sup_{\|x\| =1}{\|Ax\|}.$$
Fujii \& Furuta \cite{fu} have shown that the special case
$$ \|PQP\| \leq \|P^2 Q\|^{\frac{1}{2}} \|Q P^2\|^{\frac{1}{2}}$$
($P,Q$ both symmetric and positive definite) is equivalent to the following inequalities:
\begin{itemize}
\item \textbf{L\"owner-Heinz inequality}. (L\"owner \cite{lowner}, 1934), (Heinz \cite{heinz}, 1951). If $A \geq B \geq 0$, then
$$ A^{\alpha} \geq B^{\alpha} \qquad \mbox{for all} \quad 0 \leq \alpha \leq 1$$
\item \textbf{Heinz-Kato inequality}. (Heinz \cite{heinz}, 1951), (Kato \cite{kato}, 1952). If $A,B$ are positive operators such that $\| Tx \| \leq \|Ax\|$ and
$\|T^{*} y\| \leq \|By\|$ for all $x,y \in H$, then
$$ \left| \left\langle Tx, y\right\rangle \right|  \leq \left\| A^{\alpha} x \right\| \left\|B^{1-\alpha} y\right\|   \qquad \mbox{for all}~0 \leq \alpha \leq 1$$
\item \textbf{Cordes inequality}. (Cordes \cite{cord}, 1987). For all symmetric and positive-definite $A,B$ and all $0 \leq s \leq 1$
$$ \left\| A^s B^s\right\| \leq \|AB\|^s.$$
\end{itemize}

Classical work inspired by the original paper of Heinz \cite{heinz} include a 1953 paper of Dixmier \cite{dix}, a 1955 paper of Heinz \cite{heinz2} and a 1960 paper of Cordes \cite{cord1}.
There are now hundreds of papers concerned with variations of these inequalities, for a first introduction we refer to Bhatia \& Kittaneh \cite{bha, bha2, bha3},  Furuta \cite{furuta1},
 the books of Furuta \cite{furuta2} and Zhan \cite{zhan} and references therein. 
A natural generalization of the inequality described above is due to A. McIntosh \cite{mc}. 
\vspace{5pt}
\begin{itemize}
\item \textbf{McIntosh inequality}. (McIntosh \cite{mc}, 1979). Let $A,B \in \mathbb{R}^{n \times n}$ be symmetric and positive definite, let $X\in \mathbb{R}^{n \times n}$ be arbitrary and
let $0 < r < 1$. Then 
$$ \|A^r XB^{1-r}\| \leq  \|A X\|^{r} \|X B\|^{1-r}.$$
\end{itemize}
\vspace{5pt}
This inequality is easily seen to contain the previous inequality as a special case (relabeling $A \rightarrow A^2, B \rightarrow B^2$ and $r = 1/2$) and, in particular, it implies all previously mentioned results.
Versions and variants in Hilbert space
have been given by Bhatia \& Davis \cite{bhad} and Kittaneh \cite{kit}. 

\subsection{Characterization of Equality.}
Despite a lot of activity surrounding these inequalities, the cases of equality are not known.
If we consider diagonal matrices, it is obvious that equality will only
result in conditions on some of the rows and cannot control the remaining entries (except that they have to be sufficiently small). More generally,
if equality occurs then it is generically stable under small perturbation of those subspaces which play no role in the computation of the first singular vector
and equality can therefore only impose restrictions on some of the subspaces.
It is instructive to consider the special case of diagonal matrices
$$ A = \diag(\lambda_1, \dots, \lambda_n), ~X=\diag(x_1, \dots, x_n) \quad \mbox{and} \quad B=\diag(\mu_1, \dots, \mu_n).$$ 
The natural renormalization $ \|A X\| = 1 = \|X B\|$ boils down to
$$ \max_{1\leq i \leq n}{ |\lambda_i x_i|} = 1 =  \max_{1\leq i \leq n}{ |x_i \mu_i|}$$
under which the inequality simplifies to
$$ \max_{1\leq i \leq n}{ \lambda_i^r |x_i| \mu_i^{1-r}} \leq 1.$$ 
The case of equality clearly requires the existence of a $1 \leq i \leq n$ for which $1/\lambda_i = |x_i| = 1/\mu_i$ (in which case a suitable vector $v$ would be given by having its only
nonzero entry at the $i-$th coordinate). We observe that this implies that $A$ and $B$ have to have a common eigenvalue and that if $\|AXBv\| = 1$, then $XBv$ is also an eigenvector of $A$. We show
this to be a general description of the case of equality.
As for notation, the spectral decomposition of $A$ and $B$ will be written as
$$ Av = \sum_{k=1}^{n}{ \lambda_k \left\langle a_k, v \right\rangle}a_k \qquad \mbox{and} \qquad Bv = \sum_{k=1}^{n}{ \mu_k \left\langle b_k, v \right\rangle}b_k.$$
Throughout the rest of the paper we use $\pi_{\mu}:\mathbb{R}^n \rightarrow \mathbb{R}^n$ to denote the spectral projector onto the space spanned by the eigenvalues of $B$ associated to the eigenvalue $\mu$.

\begin{theorem}[Equality in the McIntosh inequality.]  Let $A,X,B$ be as above and scaled in such a way that $ \|A X\| = 1 = \|X B\|$, let $v \in \mathbb{R}^n$ be normalized $\|v\|=1$ and
assume $$  \|A^r XB^{1-r} v\| = \|AX\|^{r} \|XB\|^{1-r} = 1.$$
If $X\pi_{\mu} v \neq 0$, then  $X\pi_{\mu}v$ is an eigenvector of $A$ with eigenvalue $\mu$. In particular, $A$ and $B$ have at least one common
eigenvalue.
\end{theorem}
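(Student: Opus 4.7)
\emph{Proof proposal.} The plan is to promote the equality hypothesis at the single exponent $r$ into a \emph{constancy} statement for the vector-valued map $z \mapsto A^z X B^{1-z} v$, and then to differentiate in $z$ to read off the eigenvalue structure.

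First, pick a unit vector $w \in \mathbb{R}^n$ realizing the norm, so that $\langle A^r X B^{1-r} v, w\rangle = \|A^r X B^{1-r} v\| = 1$; the Cauchy--Schwarz equality case forces $w = A^r X B^{1-r} v$. Consider the entire function
\[
h(z) = \langle A^z X B^{1-z} v, w\rangle.
\]
Because $A^{it}$ and $B^{it}$ are unitary, the standard computation underlying the McIntosh inequality gives $|h(it)| \leq \|XB\| = 1$ and $|h(1+it)| \leq \|AX\| = 1$; since $h$ is bounded on the strip $\{0 \leq \Re z \leq 1\}$, Hadamard's three-lines theorem yields $|h| \leq 1$ throughout. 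The interior equality $h(r)=1$ then forces $h \equiv 1$ on the strip by the maximum modulus principle, and then on all of $\mathbb{C}$ by analytic continuation.

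Specializing back to real $z \in [0,1]$, the chain $1 = \langle A^z X B^{1-z} v, w\rangle \leq \|A^z X B^{1-z} v\|\,\|w\| \leq 1$ is saturated, and the Cauchy--Schwarz equality case returns $A^z X B^{1-z} v = w$ for every $z \in [0,1]$. The vector-valued function $z \mapsto A^z X B^{1-z} v$ is therefore constant on $[0,1]$; differentiating and cancelling the invertible factor $A^z$ produces
\[
(\log A)\, X B^{1-z} v = X (\log B)\, B^{1-z} v \qquad \text{for all } z \in [0,1].
\]
Writing $\zeta = 1-z$ and expanding $v = \sum_{\mu \in \sigma(B)} \pi_\mu v$ rearranges this as
\[
\sum_{\mu \in \sigma(B)} \mu^\zeta \bigl[(\log A)(X\pi_\mu v) - (\log \mu)(X\pi_\mu v)\bigr] = 0 \qquad \text{for all } \zeta \in [0,1].
\]
The functions $\zeta \mapsto \mu^\zeta$ for distinct $\mu \in \sigma(B)$ are linearly independent, so each bracket vanishes separately: $(\log A)(X\pi_\mu v) = (\log \mu)(X\pi_\mu v)$. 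Whenever $X\pi_\mu v \neq 0$ this is precisely the statement that $X\pi_\mu v$ is an eigenvector of $\log A$ with eigenvalue $\log \mu$, hence an eigenvector of $A$ with eigenvalue $\mu$; in particular $\mu \in \sigma(A) \cap \sigma(B)$.

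The main obstacle is the first step: extracting from equality at a single real $r$ the fact that $A^z X B^{1-z} v$ is constant. Once this rigidity is in hand, the eigenvalue statement drops out from a single differentiation together with the Vandermonde-style linear independence of the $\mu^\zeta$.
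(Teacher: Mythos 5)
Your argument is correct, and it overlaps with the paper only in its first half: like the paper, you build a holomorphic function on the strip $\{0\le\Re z\le 1\}$, bound it by $1$ on both boundary lines using the unitarity of $A^{it}$, $B^{it}$, and use the three-lines theorem plus the interior maximum at the real exponent to force the function to be identically $1$. After that the two proofs diverge. The paper works with the bilinear square $\langle A^{1-z}XB^{z}v, A^{1-z}XB^{z}v\rangle_{\mathbb{R}}$ and extracts the conclusion on the \emph{imaginary} boundary line $z=it$: the identity becomes a trigonometric polynomial $\sum_k e^{-2\alpha_k it}\bigl(\sum_l c_{k,l}e^{\beta_l it}\bigr)^2\equiv 1$, and the eigenvalue matching comes from a non-cancellation argument (the extreme frequencies have coefficients that are sums of squares). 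You instead pair linearly with the fixed real extremal vector $w=A^rXB^{1-r}v$, deduce from $h\equiv 1$ that $A^zXB^{1-z}v\equiv w$ on the \emph{real} segment (using Cauchy--Schwarz saturation together with the known McIntosh bound $\|A^zXB^{1-z}v\|\le 1$ -- legitimate, and if you prefer to avoid quoting it, the same three-lines bound applied to $\langle A^zXB^{1-z}v,u\rangle$ for arbitrary real unit $u$ gives it), then differentiate and invoke linear independence of $\zeta\mapsto\mu^{\zeta}$ to get $(\log A)X\pi_{\mu}v=(\log\mu)X\pi_{\mu}v$. Your route is arguably cleaner for the equality characterization: it handles multiplicities automatically and delivers the eigenvector statement in one line, with no frequency bookkeeping. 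The paper's route has a different payoff: the imaginary-axis trigonometric identity is exactly the object that is perturbed in Theorem 2 via the Poisson kernel and the approximation-theoretic Lemmas 1--2, so it sets up the quantitative refinement, which your real-axis rigidity does not directly provide. Two small points you should make explicit: the final ``in particular'' needs the observation that some $X\pi_{\mu}v\neq 0$ (otherwise $\|A^rXB^{1-r}v\|=0\neq 1$), and the linear independence of the finitely many exponentials $\mu^{\zeta}$ on the interval $(0,1)$ deserves a one-line justification (analyticity, or a Vandermonde argument at $n$ points). Neither is a gap.
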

This shows that the cases of equality are fairly restrictive: any nontrivial interaction between $A$ and $B$ that is created by $X$ has to have matching eigenvalues -- to a certain
extent this means that equality in the special case of diagonal matrices already paints a fairly complete picture of what can happen. As
expected, this characterization of equality does not impose any restriction on parts of spectra that play no role in determining the value of the operator norm. \\

The proof uses intuition coming from interpolation of analytic operators. A careful analysis of a suitably taylored holomorphic function shows that (under $ \|A X\| = 1 = \|X B\|$), cases of equality require
$$\forall ~t \in \mathbb{R} \qquad \|A^{1+it}XB^{-it} v\|^r= 1.$$
An instructive (albeit slightly inaccurate) visualization the underlying geometry is as follows: for any vector $v$ the matrix $B^{-it}$ introduces oscillation
separately in every eigenspace with a speed that is proportional to the eigenvalue. These rotations are then being mapped to $AXB^{-it}v$ which is then subjected to rotations induced
by $A^{it}$. The final result is a certain lack of cancellation which requires the rotations induced by $A^{it}$ to cancels those of $AXB^{-it}v$. This is only possible if their speed is
matched which requires common eigenvalues.

\subsection{Improved estimates.} Our approach also gives a way of deriving improved estimates: we show that as soon as one of the equations
$$ \lambda_i \lambda_j = \mu_k^2 \qquad \mbox{and} \qquad \lambda_i^2 = \mu_j \mu_k $$
does not have a solution, there is a quantitative improvement of the McIntosh inequality that only depends on how close the equation is to being solvable. Assuming, as we always do,
the normalization $ \|A X\| = 1 = \|X B\|$, we introduce a notion of distance between $\sigma(A)$ and $\sigma(B)$
\begin{align*}
d &= \min_{1 \leq i,j,k \leq n} \left\{  \left| \log{ \lambda_i} + \log{ \lambda_j} - 2\log{ \mu_k} \right|:\lambda_i, \lambda_j \in \sigma(A), \mu_k \in \sigma(B)    \right\} \\
&+\min_{1 \leq i,j,k \leq n}  \left\{  \left| 2\log{\lambda_i} - \log{ \mu_j} - \log{\mu_k } \right|:\lambda_i \in \sigma(A), \mu_j, \mu_k \in \sigma(B)    \right\}.
\end{align*}
Note that Theorem 1 implies that in the case of equality, there actually exists a solution to $\lambda_i^2 = \mu_j^2$ and we always have $d = 0$.
However, as soon as $d>0$, we can give a quantitative improvement that only depends on $n,r$ and $d$. 

\begin{theorem}[Refined McIntosh inequality] For every $0 < r < 1$ and every $d > 0$ there exists $c_{n,r,d} > 0$ such that
$$ \|A^r XB^{1-r}\| \leq (1-c_{n,r,d}) \|A X\|^{r} \|X B\|^{1-r}.$$
\end{theorem}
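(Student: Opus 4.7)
The plan is to reduce Theorem~2 to Theorem~1 by a contradiction-compactness argument. Suppose the conclusion fails; then there exist $n \in \NN$, $r \in (0,1)$ and $d > 0$ together with a sequence of triples $(A_k, X_k, B_k)$ satisfying the hypotheses $\|A_k X_k\| = \|X_k B_k\| = 1$ and $d(A_k, B_k) \geq d$ for which $\|A_k^r X_k B_k^{1-r}\| \to 1$. My goal is to extract a limit triple $(A_\infty, X_\infty, B_\infty)$ satisfying the same normalization and separation together with $\|A_\infty^r X_\infty B_\infty^{1-r}\| = 1$, and to invoke Theorem~1 for the contradiction.

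For the compactness step I exploit the one-parameter scaling symmetry $(A, X, B) \mapsto (cA, X/c, cB)$, which preserves both normalizations, the ratio $\|A^r X B^{1-r}\|/(\|AX\|^r\|XB\|^{1-r})$, and the quantity $d$ (built only from log-differences and hence invariant under the common shift $\log\lambda_i \mapsto \log\lambda_i + \log c$, $\log\mu_j \mapsto \log\mu_j + \log c$). After fixing the scale, say by $\|A_k\|=1$, the orthonormal eigenbases of $A_k$ and $B_k$ live in the compact group $O(n)$ and the eigenvalues sit in bounded subsets. The genuine technical obstacle is that $X_k$ may fail to be uniformly bounded: if an eigenvalue of $A_k$ or $B_k$ degenerates to $0$, then $X_k$ can develop arbitrarily large entries in the corresponding eigenspaces while maintaining $\|A_k X_k\| = \|X_k B_k\| = 1$. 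I would overcome this by a dimension reduction: using the expansion
\begin{equation*}
\|A^r X B^{1-r} v\|^2 = \sum_j \lambda_j^{2r} \Bigl| \sum_\ell \mu_\ell^{1-r} \langle b_\ell, v\rangle \langle X b_\ell, a_j\rangle \Bigr|^2
\end{equation*}
with $v$ taken to be a unit extremal vector $v_k$ realizing $\|A_k^r X_k B_k^{1-r} v_k\| \to 1$, one sees that contributions from any eigendirection whose eigenvalue tends to $0$ either drop out in the limit or prevent saturation; consequently the extremizer concentrates on the ``active'' subspaces where $A_k$ and $B_k$ stay bounded below. Restricting to those yields a subproblem of possibly smaller dimension $m \leq n$ on which all objects are uniformly controlled, and extracting a convergent subsequence then produces a limit $(A_\infty, X_\infty, B_\infty)$ with the preserved normalizations, $d(A_\infty, B_\infty) \geq d > 0$, and, by continuity of the operator norm, $\|A_\infty^r X_\infty B_\infty^{1-r}\| = 1$.

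Applying Theorem~1 to this limit forces a common eigenvalue of $A_\infty$ and $B_\infty$; however, as remarked immediately after the definition of $d$, the existence of a common eigenvalue yields $d(A_\infty, B_\infty) = 0$, contradicting $d(A_\infty, B_\infty) \geq d > 0$. Thus the initial failure assumption is impossible and the asserted constant $c_{n,r,d} > 0$ exists. The main work is in the compactness step — converting the observed asymptotic decoupling of degenerating spectral directions into a bona fide reduction to a precompact subproblem that retains both the near-saturation and the lower bound on $d$; everything downstream is then a formal matter of continuity and a direct invocation of Theorem~1.
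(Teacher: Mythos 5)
Your reduction of Theorem~2 to Theorem~1 by a contradiction--compactness argument has a genuine gap precisely at the step you yourself flag as ``the main work'': the claim that, along a near-extremal sequence, contributions from degenerating spectral directions ``either drop out in the limit or prevent saturation'' is asserted, not proved, and it is where the entire difficulty of the theorem sits. After using the scaling symmetry to fix $\|A_k\|=1$ you only control $\sigma(A_k)\subset(0,1]$; the normalizations $\|A_kX_k\|=\|X_kB_k\|=1$ bound neither $\sigma(B_k)$ away from $0$ and $\infty$ nor $\|X_k\|$, so the assertion that ``the eigenvalues sit in bounded subsets'' is already unjustified for $B_k$. In the expansion $\sum_j \lambda_j^{2r}\bigl(\sum_\ell \mu_\ell^{1-r}\langle b_\ell,v_k\rangle\langle X_kb_\ell,a_j\rangle\bigr)^2$ the factors $\lambda_j^{2r}$ or $\mu_\ell^{1-r}$ may degenerate while $\langle X_kb_\ell,a_j\rangle$ blows up at a compensating rate, so individual terms need not vanish, and the near-extremal vectors $v_k$ may push all of their mass into exactly these degenerating directions. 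In that case the compression to the ``active'' subspaces need not retain the near-saturation ($\|A_\infty^r X_\infty B_\infty^{1-r}\|$ can drop strictly below $1$ in the limit), no limiting extremizer is produced, and Theorem~1 --- which is a statement about exact equality attained by a genuine extremal vector for a fixed positive definite triple --- cannot be invoked. Ruling out this escape of mass requires an estimate that is uniform in the eigenvalue magnitudes, which is essentially the content of the theorem itself; so as written the argument is circular at its crucial point. (The endgame is fine: a common eigenvalue of the limit matrices does force $d=0$, and the separation $d$ passes to limits and only increases under restriction to invariant subspaces.)

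For contrast, the paper does not use compactness at all: it represents the value of the holomorphic function $z\mapsto\langle A^{1-z}XB^{z}v,A^{1-z}XB^{z}v\rangle_{\mathbb{R}}$ at $z=1-r$ as a Poisson integral of its boundary values, notes that on the relevant boundary line the boundary value is a real trigonometric sum whose frequencies are bounded below by $d/2$ by hypothesis, and then applies Lemma~1 (the average of such a sum over an interval of length $\sim\sqrt{n}/d$ is at most half its sup) together with the rearrangement estimate of Lemma~2 to produce an explicit deficit against the Poisson kernel, uniformly over all admissible $A,X,B$ and unit vectors $v$. This yields the effective bound $c_{n,r,d}\geq c_r\exp(-c_r^{-1}\sqrt{n}/d)$, whereas your route, even if the concentration step were completed, would only give a non-effective constant; and completing that step would in effect require reproving a uniform oscillation estimate of the Lemma~1 type, at which point the compactness scaffolding becomes unnecessary.
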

The constant $c_{n,r,d}$ arises the solution of a problem in approximation theory which we explain at the end of the paper. 
We are not aware of this particular problem ever having been treated before and prove only a basic result; improved results would imply better quantitative control on the size of $c_{n,r,d}$. The arguments in this paper  only imply
$$ c_{n,r,d} \geq c_{r} \exp{\left(-\frac{1}{c_{r}} \frac{\sqrt{n}}{d}\right)} \qquad \mbox{for some}~c_r>0~\mbox{depending on}~r.$$

The statement is easily illustrated: for any $a,b > 0$
\begin{align*} 
a^r &= \left\|  \begin{pmatrix} a & 0 \\ 0 & 0 \end{pmatrix}^{r}  \begin{pmatrix} 1 & 0 \\ 0 & 1 \end{pmatrix}  \begin{pmatrix} 1 & 0 \\ 0 & b \end{pmatrix}^{1-r} \right\| \\
&\leq   \left\|  \begin{pmatrix} a & 0 \\ 0 &0 \end{pmatrix}^{}  \begin{pmatrix} 1 & 0 \\ 0 & 1 \end{pmatrix} \right\|^{r} \left\|  \begin{pmatrix} 1 & 0 \\ 0 & 1 \end{pmatrix}   \begin{pmatrix} 1 & 0 \\ 0 & b \end{pmatrix}\right\|^{1-r}    = a^r  \max(1, b^{1-r})
\end{align*}
We see that the inequality is sharp for $0 \leq b \leq 1$ but that an improvement becomes possible as soon as $b > 1$ and the scale of the possible improvement depends only on $b$ and $r$.
We have
$$ \frac{\sigma(A)}{\|AX\|^{r}} = \left\{0, 1 \right\} \qquad \mbox \qquad  \frac{\sigma(B)}{\|XB\|^{1-r}} = \left\{\frac{1}{\max(1, b^{1-r})}, \frac{b}{\max(1,b^{1-r})}\right\}$$
and see that $d > 0$ if and only if $b > 1$ and $0 < r < 1$.

\subsection{Cordes inequality.} We emphasize that our approach is not limited to the McIntosh inequality. We illustrate this by treating the Cordes inequality \cite{cord} and obtain the same type of result using essentially the same argument: recall that the Cordes inequality states that
for all symmetric and positive-definite $A,B \in \mathbb{R}^{n \times n}$ and all $0 \leq s \leq 1$
$$ \left\| A^s B^s\right\| \leq \|AB\|^s.$$
This inequality has been of the continued interest (see, for example, applications related to the geometry of $C^*-$algebras \cite{andr, corach, corach1}), however, cases of equality are not known.
If $s \in \left\{0,1\right\}$, the inequality is trivially true and no information about $A$ and $B$ can be deduced. We write the spectral decomposition of $A$ and $B$ as
$$ Av = \sum_{k=1}^{n}{ \lambda_k \left\langle a_k, v \right\rangle}a_k \qquad \mbox{and} \qquad Bv = \sum_{k=1}^{n}{ \mu_k \left\langle b_k, v \right\rangle}b_k.$$
Under a suitable normalization $\|AB\| =1$, the case of equality requires $\lambda_i \mu_j = 1$ to have a solution. Moreover, the only way for the inequality to be attained for a vector $v$ is 'diagonal' 
action: all eigenvectors $b_k$ of $B$ for which $\left\langle v, b_k \right\rangle \neq 0$ are also eigenvectors of $A$ with the inverse eigenvalue, i.e. $A^s B^s b_k = A^s \mu_k^s b_k = \mu_k^s A^s b_k= b_k$.
 We also obtain an improvement as soon as either
$$ \frac{1}{\lambda_i} \frac{1}{\lambda_j} = \mu_k^2 \qquad \mbox{or} \qquad \frac{1}{\lambda_i^2} = \mu_j \mu_k \qquad \mbox{does not have a solution.}$$
More precisely, assuming again the rescaling $\| AB\| = 1$, we introduce a parameter $d^{*} \geq 0$ via
\begin{align*}
d^* &= \min_{1 \leq i,j,k \leq n} \left\{  \left| \log{ \lambda_i} + \log{ \lambda_j} + 2\log{ \mu_k} \right|:\lambda_i, \lambda_j \in \sigma(A), \mu_k \in \sigma(B)    \right\} \\
&+\min_{1 \leq i,j,k \leq n}  \left\{  \left| 2\log{\lambda_i} + \log{ \mu_j} + \log{\mu_k } \right|:\lambda_i \in \sigma(A), \mu_j, \mu_k \in \sigma(B)    \right\}.
\end{align*}

\begin{theorem}[Refined Cordes inequality] Let $0 < s < 1$, $ \|A B\| = 1 $, let $v \in \mathbb{R}^n$ be normalized $\|v\|=1$ and
assume 
$$  \|A^s B^{s} v\| = \|AB\|^s = 1.$$
If $\pi_{\mu}v \neq 0$, then $\pi_{\mu} v$ is an eigenvector of $A$ with eigenvalue $1/\mu$.
If $d^* > 0$, then
$$ \|A^s B^s\| \leq \left( 1 - c_{n,s,d^*}\right)  \|AB\|^s.$$
\end{theorem}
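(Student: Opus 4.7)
The plan is to deduce Theorem 3 as a corollary of Theorems 1 and 2 via a substitution inside the McIntosh framework, thereby avoiding any new analytic interpolation argument. The key observation is that Cordes is a special case of McIntosh: feeding $A' = A$, $X' = B$, $B' = B^{-1}$ and parameter $r = s$ into the McIntosh setup, one has $(A')^{s} X' (B')^{1-s} = A^{s} B (B^{-1})^{1-s} = A^{s} B^{s}$, while the McIntosh normalization $\|A' X'\| = 1 = \|X' B'\|$ reduces to $\|AB\| = 1$ on one side (the Cordes normalization) and the tautology $\|B B^{-1}\| = 1$ on the other. Hence under the hypotheses of Theorem 3 the hypotheses of Theorems 1 and 2 are automatically in force for this substituted triple.

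The next step is to translate the spectral conclusion of Theorem 1 back into Cordes variables. Since $\sigma(B^{-1}) = \{1/\mu : \mu \in \sigma(B)\}$, the spectral projector of $B' = B^{-1}$ onto its $\mu$-eigenspace coincides with the projector $\pi_{1/\mu}$ of $B$ onto its $(1/\mu)$-eigenspace. Theorem 1 then asserts that $X' \pi_{1/\mu} v = B \pi_{1/\mu} v = (1/\mu)\,\pi_{1/\mu} v$, when nonzero, is an eigenvector of $A$ with eigenvalue $\mu$. Relabeling $\mu \leftrightarrow 1/\mu$, this is exactly the Theorem 3 assertion that whenever $\pi_\mu v \neq 0$, $\pi_\mu v$ is an $A$-eigenvector with eigenvalue $1/\mu$.

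For the quantitative refinement I would verify that the McIntosh distance $d$ computed on $\sigma(A) \cup \sigma(B^{-1})$ equals the parameter $d^*$ of Theorem 3 computed on $\sigma(A) \cup \sigma(B)$. This follows immediately from $\log(1/\mu) = -\log \mu$: each of the two minima defining $d$ becomes the corresponding minimum defining $d^*$ after a uniform sign flip on every $\log \mu$ term. Theorem 2 then yields $\|A^s B^s\| \leq (1 - c_{n,s,d^*}) \|AB\|^s$ with the same constant.

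The main obstacle is purely conceptual — spotting the substitution $X = B$ and $B \mapsto B^{-1}$ that collapses the McIntosh factor $X B^{1-r}$ into $B^r$. Once this is identified, the remaining work is bookkeeping, and the constants, the spectral characterization, and the distance parameter all transfer without loss. Alternatively one could mimic the interpolation proof of Theorem 1 directly on the holomorphic family $z \mapsto \langle A^{z} B^{z} v, u\rangle$ and derive Theorem 3 from scratch, but the reduction yields the same result with considerably less effort.
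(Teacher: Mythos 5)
Your reduction is correct, and it is a genuinely different route from the paper's. The paper proves Theorem 3 by re-running the interpolation argument from scratch on the holomorphic family $z \mapsto \left\langle A^{z}B^{z}v, A^{z}B^{z}v\right\rangle_{\mathbb{R}}$: boundary bounds on $\Re z=0$ (unitarity) and $\Re z=1$ (the normalization $\|AB\|=1$), the maximum principle, and then the same frequency analysis as in Theorems 1 and 2 applied to $\sum_k \lambda_k^{2it}\bigl(\sum_l \mu_l^{it}\langle v,b_l\rangle\langle b_l,a_k\rangle\bigr)^2$. You instead specialize the McIntosh framework with $X'=B$, $B'=B^{-1}$, $r=s$, so that $A^{s}X'(B')^{1-s}=A^{s}B^{s}$, $\|A'X'\|=\|AB\|=1$, $\|X'B'\|=1$, and the spectral conclusion of Theorem 1 (applied to the eigenvalue $\nu=1/\mu$ of $B^{-1}$, whose projector is $\pi_\mu$, with $B\pi_\mu v=\mu\,\pi_\mu v$ proportional to $\pi_\mu v$) translates exactly into the Cordes-side statement that $\pi_\mu v$ is an $A$-eigenvector with eigenvalue $1/\mu$; likewise $\log(1/\mu)=-\log\mu$ gives $d=d^{*}$ term by term, so Theorem 2 yields the refined bound with the same constant. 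Note that your reduction and the paper's direct argument are at bottom the same computation: unwinding the substitution, the family $A^{1-z}X'(B')^{z}=A^{1-z}B^{1-z}$ is the paper's family after $z\mapsto 1-z$. What your route buys is economy and transparency --- no analytic work is repeated, and the paper's remark that $c_{n,s,d^{*}}$ is governed by the same approximation problem as $c_{n,s,d}$ becomes the literal identity $c_{n,s,d^{*}}=c_{n,s,d}$ --- whereas the paper's direct adaptation is chosen to advertise that the interpolation method applies verbatim to other inequalities even when no algebraic reduction to McIntosh is available.
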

The precise behavior of $c_{n,s,d^*}$ is determined by the same problem in approximation theory that already determines $c_{n,s,d}$ in Theorem 2. The proof of Theorem 3 is merely a minor
variation of the previous arguments used in Theorem 1 and Theorem 2 -- indeed, we believe the argument to be applicable to a large class of inequalities.

\section{Proof of Theorem 1}

\subsection{A holomorphic function.} Given a symmetric, positive definite matrix $A:\mathbb{R}^n \rightarrow \mathbb{R}^n$ and a vector $v \in \mathbb{R}^n$, we write its spectral decomposition as
$$ Av = \sum_{k=1}^{n}{\lambda_k \left\langle v, a_k \right\rangle a_k}.$$
This has a natural extension to complex powers: for $z \in \mathbb{C}$, we define
$$ A^{z}v = \sum_{k=1}^{n}{\lambda_k^{z} \left\langle v, a_k \right\rangle a_k},$$
which, for a fixed vector $v$, is merely a vector whose entries are sums of complex exponentials.
Note that, for $\lambda \geq 0$,
$$ \lambda^{it} = \cos{(\left(\log{\lambda}\right)t)} + i  \sin{(\left(\log{\lambda}\right)t)} = e^{i (\log{\lambda}) t}.$$
In particular, we have that $A^{it}$ is a unitary matrix since
\begin{align*}
 \left\| \sum_{k=1}^{n}{\lambda_k^{i t} \left\langle v, a_k \right\rangle a_k}\right\|^2 &=  \left\|  \Re  \sum_{k=1}^{n}{\lambda_k^{i t} \left\langle v, a_k \right\rangle a_k} + \Im  \sum_{k=1}^{n}{\lambda_k^{i t} \left\langle v, a_k \right\rangle a_k}        \right\|^2 \\
&=   \left\|  \sum_{k=1}^{n}{  \cos{(\left(\log{\lambda_k}\right)t)}  \left\langle v, a_k \right\rangle a_k}\right\|^2 +  \left\| \sum_{k=1}^{n}{\sin{(\left(\log{\lambda_k}\right)t)}\left\langle v, a_k \right\rangle a_k}        \right\|^2  \\
&=     \sum_{k=1}^{n}{  \left[\cos^2{(\left(\log{\lambda_k}\right)t)} +\sin^2{(\left(\log{\lambda_k}\right)t)}\right] |a_k|^2}   =     \sum_{k=1}^{n}{  |a_k|^2} = \|v\|^2.
\end{align*}

 Since the desired inequality is invariant under multiplication by scalars, we can assume without loss of generality that
$$  \| A X \| = 1 = \|X B\|.$$
We will henceforth assume $v$ to be an arbitrary but fixed vector with $\|v\| = 1$.
The proof consists of a detailed analysis of the behavior of the vector-valued map
$$ z \rightarrow A^{1-z} X B^{z} v \qquad \mbox{on} \quad \left\{z \in \mathbb{C}: 0 \leq \Re z \leq 1\right\}.$$
We are mainly interested in the size of the arising vectors, which would motivate taking the complex inner product
$\left\langle w, w \right\rangle_{\mathbb{C}} := w \cdot \overline{w}$, however, this quantity is not holomorphic. Instead,
we consider, for fixed $\|v\|=1$, the complex-valued map
$$z \rightarrow  \left\langle A^{1-z} X B^{z} v,  A^{1-z} X B^{z} v  \right\rangle_{\mathbb{R}}  \quad \mbox{on} ~ \left\{z \in \mathbb{C}: 0 \leq \Re z \leq 1\right\},~\mbox{where}~ \left\langle v, w \right\rangle_{\mathbb{R}} = \sum_{k=1}^{n}{v_i w_i}.$$
We observe that every single entry in the vector
$$  A^{1-z} X B^{z} v \quad \mbox{is holomorphic and thus so is} \qquad   \left\langle A^{1-z} X B^{z} v,  A^{1-z} X B^{z} v\right\rangle_{\mathbb{R}}.$$
Of course, the Cauchy-Schwarz inequality is still valid since
$$ \left| \left\langle v, w \right\rangle_{\mathbb{R}} \right| =  \left|  \sum_{i=1}^{n}{v_i w_i} \right|  \leq  \sum_{i=1}^{n}{ \left| v_i w_i \right| } \leq   \left(\sum_{i=1}^{n}{ \left| v_i \right|^2 } \right)^{\frac{1}{2}}
 \left(\sum_{i=1}^{n}{ \left| w_i \right|^2 } \right)^{\frac{1}{2}} = \| v \|^{} \|w\|^{}.$$

We will now consider $z = 0 +i t$ for $t \in \mathbb{R}$. Using Cauchy-Schwarz, we see that
$$ \left| \left\langle A^{1-it} X B^{it} v,  A^{1-it} X B^{it} v\right\rangle \right| \leq \left\| A^{1-it} X B^{it} v \right\|^2.$$
$A^{-i t}$ and $B^{ it}$ are unitary matrices and $\left\| v \right\| = 1$, therefore
$$  \left\| A^{1-it} X B^{it} v \right\| =  \left\| A^{} X B^{it} v \right\| \leq \left\| A^{} X\right\| \left\| B^{it} v \right\| = 1.$$
The same reasoning implies that for $z = 1 + it$ 
$$  \left\| A^{it} X B^{1-it} v \right\| =   \left\| X B^{1-it} v \right\|   \leq  \left\|X B^{} \right\| \left\| B^{-it} v \right\|  = 1.$$
Using the trivial estimate  
\begin{align*}
 \|A^{1-z} X B^{z} v\| &\leq \|A^{1-z}\| \|X\| \|B^{z}\| \leq  \|A\|^{1-\Re z}   \|X\| \|B\|^{\Re z} \\
&\leq \max(\|A\|, 1) \|X\| \max( \|B\|, 1)
\end{align*}
it is easy to see that 
$$ \left| \left\langle A^{1-z} X B^{z} v,  A^{1-z} X B^{z} v  \right\rangle_{\mathbb{R}} \right| \leq 1 \qquad \mbox{ on} \quad \left\{z \in \mathbb{C}: 0 \leq \Re z \leq 1\right\}.$$
Let us now assume that indeed
$$ \|A^r XB^{1-r}\| =  \|A X\|^{r} \|X B\|^{1-r}= 1.$$
This means that for some vector $\|v\| = 1$, we have $ \| A^r XB^{1-r} v \| = 1$. Note that for this particular choice $z=1-r$ every single entry is real-valued and therefore
$$  \left\langle A^{r} X B^{1-r} v,  A^{r} X B^{1-r} v  \right\rangle_{\mathbb{R}}  = 1.$$
However, we are dealing with a holomorphic function that is uniformly bounded on the entire domain and has boundary values of size at most 1:  the maximum principle then implies that
$$ \left\langle A^{1-z} X B^{z} v,  A^{1-z} X B^{z} v  \right\rangle_{\mathbb{R}} =1  \qquad \mbox{for all} \quad \left\{z \in \mathbb{C}: 0 \leq \Re z \leq 1\right\}.$$
The next step is to analyze the implications of this equation for $ z = 0+ it$  (one could perform an equivalent analysis on $z = 1 + it$), where the equation simply becomes
$$ \left\langle A^{1 -  i t} X B^{ i t} v,  A^{1- i t} X B^{i t} v  \right\rangle_{\mathbb{R}} =1 \qquad \mbox{for all}~ t \in \mathbb{R}.$$
We now compute these matrices: 
$$ B^{it}v = \sum_{l = 1}^{n}{  \mu_l^{it}  \left\langle v, b_l \right\rangle b_l} \qquad \mbox{and thus} \qquad   XB^{it}v = \sum_{l = 1}^{n}{  \mu_l^{it}  \left\langle v, b_l \right\rangle X b_l}.$$
Furthermore
$$ A^{1 -  i t}  XB^{it}v = \sum_{k=1}^{n}{ \lambda_k^{1-it} \left\langle  XB^{it}v , a_k \right\rangle a_k}$$
and therefore
\begin{align*}
\left\langle A^{1 -  i t} X B^{ i t} v,  A^{1- i t} X B^{i t} v  \right\rangle_{\mathbb{R}}  &= \left\langle \sum_{k=1}^{n}{ \lambda_k^{1-it} \left\langle  XB^{it}v , a_k \right\rangle a_k},  \sum_{k=1}^{n}{ \lambda_k^{1-it} \left\langle  XB^{it}v , a_k \right\rangle a_k} \right\rangle_{\mathbb{R}} \\
&= \sum_{k=1}^{n}{ \lambda_k^{2-2it} \left\langle  XB^{it}v , a_k \right\rangle_{\mathbb{R}}^2}   \\
&= \sum_{k=1}^{n}{ \lambda_k^{2-2it} \left\langle  \sum_{l = 1}^{n}{  \mu_l^{it}  \left\langle v, b_l \right\rangle X b_l} , a_k \right\rangle_{\mathbb{R}}^2} \\
&= \sum_{k=1}^{n}{\lambda_k^{2-2it} \left(   \sum_{l=1}^{n} \mu_l^{it} \left\langle v, b_l \right\rangle   \left\langle X b_l, a_k \right\rangle  \right)^2 } = 1
\end{align*}

\subsection{Cases of Equality: Proof of Theorem 1}

\begin{proof}  The precise coefficients are not as important as the algebraic structure:
writing
$$ \alpha_k = \log{(\lambda_k)},~\beta_l = \log{(\mu_l)}, \quad \mbox{and} \quad c_{k,l} = \lambda_k \left\langle v, b_l \right\rangle   \left\langle X b_l, a_k \right\rangle$$
allows to notationally simplify the equation to
$$ \sum_{k=1}^{n}{  e^{- 2 \alpha_k i t} \left( \sum_{l=1}^{n}{ c_{k,l} e^{\beta_l i t } } \right)^2} =1.$$
Note that all coefficients are real-valued.
The remainder of the proof is algebraic: the only way for an expression of this type to be identically 1 for all values of $t \in \mathbb{R}$ is for all terms (except constants) to cancel.
We start with the assumption that every eigenvalue has multiplicity one and the eigenvalues are given by $\beta_1 < \dots < \beta_n$.\\

\textbf{Fact.} If $c_{k,l} \neq 0$, then $\alpha_k = \beta_l$. In particular, the equation $\alpha_k = \beta_l$ has a solution.\\

 Consider the quantities
$$
 \underline{\sigma} = \min \left\{ 2\beta_l - 2\alpha_k: c_{k,l} \neq 0 \right\} \qquad \mbox{and} \qquad
 \overline{\sigma} = \max \left\{ 2\beta_l - 2\alpha_k: c_{k,l} \neq 0 \right\}.
$$
These numbers give the smallest and largest occuring frequencies: an explicit expansion shows that the algebraic structure forces the leading coefficients to be sums of squares. This implies that there is no
form of cancellation and allows us to identify the smallest and largest occuring frequencies as $\underline{\sigma}$ and $\overline{\sigma}$:
$$
 \sum_{k=1}^{n}{ e^{- 2 \alpha_k i t} \left( \sum_{l=1}^{n}{ c_{k,l} e^{\beta_l i t } } \right)^2}  =  
 e^{i \underline{\sigma} t} \left(\sum_{k,l = 1 \atop 2\beta_l - 2\alpha_k = \underline{\sigma} }^{n}{c_{l,k}^2} \right) +  \sum_{j}{d_j e^{i e_j t}} + e^{i \overline{\sigma} t} \left(\sum_{k, l = 1 \atop 2\beta_l - 2\alpha_k = \overline{\sigma}}^{n}{c_{l,k}^2} \right)
$$
where the $d_j, e_j$ could be explicitly computed and the arising frequencies satisfy $\underline{\sigma} < e_j < \overline{ \sigma}$. However, in order for this expression to be 1, we require
$$ \underline{\sigma} = 0 = \overline{\sigma},$$
which was the desired statement. It remains to deal with the general case, where eigenvalues may have multiplicities.
Let us assume there are exactly $m$ distinct eigenvalues $\beta_1 < \dots < \beta_{m}$. Then we may write the equation as
 $$ \sum_{k=1}^{n}{  e^{- 2 \alpha_k i t} \left( \sum_{l=1}^{m}{   \left\langle X \pi_{\beta_{l}} v, a_k \right\rangle  e^{\beta_l i t } } \right)^2} =1$$
and the same argument as before applies: either $  \left\langle X \pi_{\beta_{l}} v, a_k \right\rangle = 0$ or $\alpha_k = \beta_{l}$. This means that if $X \pi_{\beta} v \neq 0$, then
it has to be mapped to the eigenspace of $A$ associated to the eigenvalue $\beta$.
\end{proof}

\section{Proof of Theorem 2}

\subsection{Two simple Lemmata.}
We need a fairly simple result stating that trigonometric functions oscillate to a certain extent on a fixed interval if their frequencies are bounded away from 0.

\begin{lemma}  For every $\delta > 0$ and $n \in \mathbb{N}$, any $d_1 < d_2 < \dots < d_n$ with $|d_j| \geq \eta$ and any $c_j \in \mathbb{R}$
$$ \int_{-2\sqrt{n}/\delta}^{2\sqrt{n}/\delta}{  \sum_{j=1}^{n}{c_j e^{i d_j t}} dt}  \leq  \frac{1}{2} \frac{4\sqrt{n}}{\delta} \left\| \sum_{j=1}^{n}{c_j e^{i d_j t}} \right\|_{L^{\infty}{(\mathbb{R})}}$$
\end{lemma}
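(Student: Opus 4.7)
The plan is to prove the bound by making the trivial estimate tighter via two observations: the lower bound $|d_j|\geq\delta$ (reading $\eta=\delta$; the mismatch in the statement appears to be a typo) makes each individual exponential integral small, and Parseval-type orthogonality of distinct frequencies converts an $\ell^1$ estimate on the coefficients into the desired $L^\infty$ bound on the sum.

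First I would integrate termwise. Writing $T = 2\sqrt{n}/\delta$ and $f(t)=\sum_{j=1}^n c_j e^{id_j t}$, a direct computation gives
\[
\int_{-T}^{T} f(t)\,dt \;=\; \sum_{j=1}^{n} c_j \cdot \frac{2\sin(d_j T)}{d_j}.
\]
Using $|\sin(d_j T)|\leq 1$ and the hypothesis $|d_j|\geq \delta$, the triangle inequality yields
\[
\left|\int_{-T}^{T} f(t)\,dt\right| \;\leq\; \frac{2}{\delta}\sum_{j=1}^{n}|c_j|.
\]
This is already better than the trivial estimate $2T\|f\|_\infty = (4\sqrt{n}/\delta)\|f\|_\infty$, but I still need to relate $\sum|c_j|$ to $\|f\|_\infty$.

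The next step is Cauchy–Schwarz: $\sum_{j=1}^n |c_j| \leq \sqrt{n}\bigl(\sum_{j=1}^n c_j^2\bigr)^{1/2}$. The remaining task is therefore to show that $\sum c_j^2 \leq \|f\|_\infty^2$. This is a standard Parseval/Bohr fact for (almost periodic) trigonometric polynomials with distinct real frequencies: since the characters $e^{id_j t}$ are mutually orthogonal in the mean, one has
\[
\sum_{j=1}^{n} c_j^2 \;=\; \lim_{S\to\infty}\frac{1}{2S}\int_{-S}^{S} |f(t)|^2\,dt \;\leq\; \|f\|_{L^\infty(\mathbb{R})}^2.
\]
(For the case when some $d_i - d_j$ or $d_i+d_j$ coincide, which would matter if we were expanding $|f|^2$, the argument still goes through because $f$ already has the $c_j$ as genuine Bohr–Fourier coefficients at the distinct frequencies $d_j$, and Bessel's inequality for Bohr–Fourier coefficients is automatic.)

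Chaining the estimates gives
\[
\left|\int_{-T}^{T} f(t)\,dt\right| \;\leq\; \frac{2}{\delta}\sqrt{n}\,\|f\|_{L^\infty(\mathbb{R})} \;=\; \frac{1}{2}\cdot\frac{4\sqrt{n}}{\delta}\,\|f\|_{L^\infty(\mathbb{R})},
\]
which is exactly the claimed inequality. The only genuine subtlety is the Bohr/Parseval step; everything else is just integration and Cauchy–Schwarz. I would state the Parseval step as an explicit sub-claim (``Bessel's inequality for trigonometric polynomials with distinct real frequencies'') to keep the proof self-contained, since the rest of the paper does not otherwise invoke almost periodic theory.
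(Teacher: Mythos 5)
Your proof is correct and follows essentially the same route as the paper: termwise integration using $|d_j|\ge\delta$ (the paper indeed intends $\eta=\delta$), then Cauchy--Schwarz $\sum|c_j|\le\sqrt{n}\,(\sum c_j^2)^{1/2}$, and finally the mean-square/Parseval step $\sum c_j^2\le\|f\|_{L^\infty}^2$, which the paper carries out by the same limiting argument $\frac{1}{2T}\int_{-T}^{T}|f|^2\,dt\to\sum c_j^2$ that you phrase as Bessel's inequality for Bohr--Fourier coefficients.
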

\begin{proof}
It is easy to see that
\begin{align*} \left|  \frac{\delta}{4\sqrt{n}} \int_{-2\sqrt{n}/\delta}^{2\sqrt{n}/\delta}{  \sum_{j=1}^{n}{c_j e^{i d_j t}} dt} \right|
 &\leq \sum_{j=1}^{n}{|c_j| \left| \frac{\delta}{4\sqrt{n}} \int_{-2\sqrt{n}/\delta}^{2\sqrt{n}/\delta}{ e^{i d_j t}dt} \right| }  \\
&= \sum_{j=1}^{n}{   |c_j|     \sin{\left(\frac{2\sqrt{n} d_j}{\delta}\right)}   \frac{  \delta}{ 2\sqrt{n} d_j}} \\
&\leq \sup_{t \geq 2\sqrt{n}}{ \left| \frac{\sin{t}}{t} \right|}  \sum_{j=1}^{n}{ |c_j| } \leq \frac{1}{2\sqrt{n}}\sum_{j=1}^{n}{ |c_j| }  .
\end{align*}
It remains to bound the $L^{\infty}-$norm. We first observe that
$$ 2T \left\|\sum_{j=1}^{n}{c_j e^{i d_j t}}  \right\|^2_{L^{\infty}(\mathbb{R})} \geq  \int_{-T}^{T}{\left| \sum_{j=1}^{n}{c_j e^{i d_j t}}  \right|^2 dt} = \int_{-T}^{T}{\sum_{j=1}^{n}{c_j^2} + \sum_{j,k = 1 \atop j \neq k}^{n}{ c_j c_k e^{i(d_j-d_k)t}}dt}$$ 
and dividing by $T$ and letting $T \rightarrow \infty$ shows with Cauchy-Schwarz
$$ \frac{1}{2\sqrt{n}}\sum_{j=1}^{n}{ |c_j| } \leq \frac{1}{2} \left( \sum_{j=1}^{n}{c_j^2} \right)^{\frac{1}{2}} \leq \frac{1}{2} \left\| \sum_{j=1}^{n}{c_j e^{i d_j t}} \right\|_{L^{\infty}}.$$
\end{proof}
It could be desirable to prove a version of this inequality for shorter intervals of size $\sim \delta^{-1}$. Such a result would make our improvement $c_{n,r,d}$ independent of $n$ (although, of course, the
size of the matrix still enters implicitly through $d$).

\begin{lemma} Let $f:[0, \infty] \rightarrow \mathbb{R}^+$ be monotonically decreasing. If $g \in L^{\infty}(\mathbb{R})$ satisfies
$$ \int_{0}^{x}{g(z) dz} \leq  \frac{x}{2} \|g\|_{L^{\infty}(\mathbb{R})}, \quad \mbox{then} \qquad \int_{0}^{x}{f(z)g(z)dz} \leq \|g\|_{L^{\infty}(\mathbb{R})} \left(\int_{0}^{3x/4}{f(z)dz} - \int_{3x/4}^{x}{f(z)dz} \right).$$
\end{lemma}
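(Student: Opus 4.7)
The plan is to recognize the inequality as a one-variable extremal statement solved by bathtub geometry. Set $M := \|g\|_{L^\infty(\mathbb{R})}$. The right-hand side of the claim is exactly $M\int_0^x f\cdot h\,dz$ where $h := \chi_{[0,3x/4]} - \chi_{[3x/4,x]}$, and the candidate $g_0 := Mh$ saturates both hypotheses (pointwise $|g_0|=M$ and $\int_0^x g_0\,dz = xM/2$). So $g_0$ is the natural extremizer, the split point $3x/4$ is forced by the interplay of the $L^\infty$ bound with the integral constraint, and my goal is to show that $g \mapsto \int_0^x fg\,dz$ is maximized by $g_0$ among admissible $g$.

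To do this I would rewrite the desired bound $\int_0^x fg\,dz \le M\int_0^x fh\,dz$ as
$$\int_0^{3x/4} f(z)(M-g(z))\,dz \;-\; \int_{3x/4}^x f(z)(M+g(z))\,dz \;\ge\; 0.$$
Both integrands are nonnegative because $|g|\le M$, so the monotonicity of $f$ can be exploited piecewise with the split at $3x/4$: on $[0,3x/4]$ one has $f(z)\ge f(3x/4)$, and on $[3x/4,x]$ one has $f(z)\le f(3x/4)$. After pulling the common factor $f(3x/4)$ out of each term (with the correct sign on each piece), the two remaining linear integrals in $g$ telescope and yield
$$\int_0^{3x/4} f(z)(M-g(z))\,dz - \int_{3x/4}^x f(z)(M+g(z))\,dz \;\ge\; f(3x/4)\!\left(\tfrac{xM}{2} - \int_0^x g(z)\,dz\right),$$
and the right-hand side is $\ge 0$ by the hypothesis.

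I do not expect any real obstacle. The argument uses only the monotonicity of $f$, the nonnegativity of $f$ (encoded by $f:[0,\infty]\to\mathbb{R}^+$), and the single integral bound on $g$ at the endpoint $x$; no cumulative control $\int_0^t g\le tM/2$ for $t<x$ is required, and $g$ is not assumed to have a definite sign, so the only real care required is sign bookkeeping on the two pieces. The slightly non-obvious feature is that the constant $3/4$ is completely rigid: it is pinned down by the unique value of $a$ for which $g_0 = M\chi_{[0,a]}-M\chi_{[a,x]}$ simultaneously saturates $|g_0|\le M$ and $\int g_0 \le xM/2$.
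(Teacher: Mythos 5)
Your proof is correct, and it takes a genuinely different (and more elementary) route than the paper. The paper first invokes the Hardy--Littlewood rearrangement inequality to replace $g$ by its decreasing rearrangement $g^*$ on $[0,x]$ (using $f^*=f$), and then compares $g^*$ with the bathtub function $h=M\chi_{[0,3x/4]}-M\chi_{[3x/4,x]}$, finishing with exactly the kind of monotonicity argument you make explicit. Your version shows the rearrangement step is unnecessary: writing the claimed inequality as
$$\int_0^{3x/4} f(z)\bigl(M-g(z)\bigr)\,dz-\int_{3x/4}^x f(z)\bigl(M+g(z)\bigr)\,dz\geq 0,$$
and comparing $f$ with the constant $f(3x/4)$ on each piece (legitimate because $M-g\geq 0$ and $M+g\geq 0$ a.e.), reduces everything to $f(3x/4)\bigl(\tfrac{xM}{2}-\int_0^x g\bigr)\geq 0$, which is the hypothesis together with $f\geq 0$. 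This buys two things: it is self-contained (no citation of the rearrangement inequality), and it avoids the mild subtlety in the paper's sketch of rearranging a signed $g$ while keeping both the constraint $\int_0^x g^*=\int_0^x g$ and the pointwise bound; your argument works directly with $g$ itself. Your identification of the extremizer $g_0=Mh$ and of why $3/4$ is the forced split point matches the picture the paper draws. The only bookkeeping worth stating explicitly in a written-up version is that $|g|\leq M$ holds almost everywhere and that $f(3x/4)\geq 0$, both of which you use.
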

\begin{proof} Before we sketch the argument, we find it helpful to display the extremal function $g$ that maximizes $\int_{0}^{x}{f(z)g(z)dx}$ (and does so for every nonnegative, monotonically decreasing $f$).
\begin{center}
\begin{figure}[h!]
\begin{tikzpicture}[scale=1]
\draw [->, thick] (0,0) -- (9,0);
\draw [->, thick] (0,-1) -- (0,2);
\draw [ thick] (0, 1.5) to[out = 0, in = 180] (8.5, 0.2);
\node at (2, 1.6) {$f$};
\draw [  thick] (0,1) -- (6,1);
\draw [  thick] (6,-1) -- (6,1);
\draw [ thick] (8,-1) -- (6,-1);
\node at (6.3, -0.8) {$g$};
\draw [ thick] (8,-0.1) -- (8,0.1);
\node at (8.1, -0.3) {$x$};
\node at (-0.9, 1) {$\|g\|_{L^{\infty}(\mathbb{R})}$};
\draw [ thick] (-0.1, 1) -- (0.1, 1);
\node at (-0.9, -1) {-$\|g\|_{L^{\infty}(\mathbb{R})}$};
\draw [ thick] (-0.1, -1) -- (0.1, -1);
\end{tikzpicture}
\caption{Maximizing the integral over $f \cdot g$ subject to constraints.}
\end{figure}
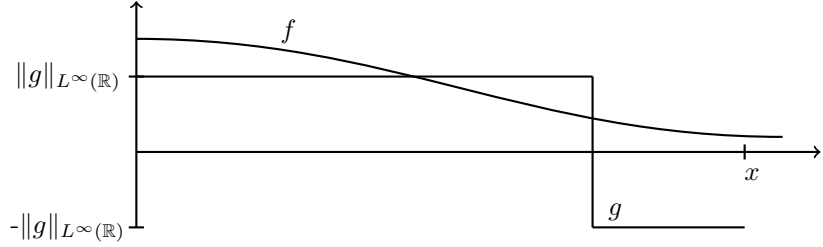
\end{center}
\vspace{-20pt}
The proof uses the classical Hardy-Littlewood rearrangement inequality \cite{hardy}, which implies that
$$  \int_{0}^{x}{f(z)g(z)dz} \leq  \int_{0}^{x}{f^*(z)g^*(z)dz},$$
where $f^*$ is the monotonically decreasing rearrangement of $f$ on $[0, x]$. Since $f$ is monotonically decreasing,
we have $f^* = f$. 
Consider now 
$$ h(z) = \begin{cases}  \|g\|_{L^{\infty}(\mathbb{R})} \qquad &\mbox{if}~0 \leq z \leq 3x/4 \\
-  \|g\|_{L^{\infty}(\mathbb{R})} \qquad &\mbox{if}~3x/4 \leq z \leq x.\end{cases}$$
It now suffices to show that
$$  \int_{0}^{x}{f(z)h(z)dz} -  \int_{0}^{x}{f(z)g^*(z)dz} =  \int_{0}^{x}{f(z)(h(z) - g^*(z))dz} \geq 0.$$
This follows immediately from the fact that
$$  \int_{0}^{x}{h(z) - g^*(z)dz} \geq 0,~~ \mbox{the definition of}~h~\mbox{and the monotonicity of}~f.$$
\end{proof}

We will use this Lemma on a symmetric interval $[-x,x] \subset \mathbb{R}$ with $f$ satisfying $f(x) = f(-x)$ in which case the result states that if
$$ \int_{-x}^{x}{g(z) dz} \leq  x \|g\|_{L^{\infty}(\mathbb{R})},~ \mbox{then} ~~~  \int_{-x}^{x}{f(z)g(z)dz} \leq  2\|g\|_{L^{\infty}(\mathbb{R})} \left(\int_{0}^{3x/4}{f(z)dz} - \int_{3x/4}^{x}{f(z)dz} \right).$$

\subsection{Proof of Theorem 2} We can now prove the refined inequality.

\begin{proof} Fix $0 < r < 1$ as well as the matrices $A, X$ and $B$. We can assume again
$$ \left\| AX \right\| = \left\|XB \right\| = 1$$
and $d>0$, where
\begin{align*}
 d &= \min_{1 \leq i,j,k \leq n} \left\{  \left| \log{ \lambda_i} + \log{ \lambda_j} - 2 \log{ \mu_k} \right|:\lambda_i, \lambda_j \in \sigma(A), \mu_k \in \sigma(B)    \right\} \\
&+\min_{1 \leq i,j,k \leq n}  \left\{  \left| 2\log{\lambda_i} - \log{ \mu_j} - \log{\mu_k } \right|:\lambda_i \in \sigma(A), \mu_j, \mu_k \in \sigma(B)    \right\}.
\end{align*}
It remains to show that for all normalized vector $\|v\| = 1$
$$ \| A^r X B^{1-r} v\| \leq 1 - c_{n,r,d}$$
for some constant $c_{n,r,d} > 0$ that only depends on these parameters. Consider the holomorphic map
$$ z \rightarrow  \left\langle A^{1-z} X B^{z} v,  A^{1-z} X B^{z} v  \right\rangle_{\mathbb{R}}  \qquad \mbox{on} \quad \left\{z \in \mathbb{C}: 0 \leq \Re z \leq 1\right\}.$$
It suffices to prove that this function at $z = 1- r + 0i$ is at most $1 - c_{n,r,d}$ for all $\|v\|=1$. Since the function is holomorphic, we can rewrite its value at a point as the
integral over the associated Poisson kernel $P$ paired with the boundary values of the function
$$  \left\langle A^{1-z} X B^{z} v,  A^{1-z} X B^{z} v  \right\rangle_{\mathbb{R}} = \int_{\partial  \left\{z \in \mathbb{C}: 0 \leq \Re z \leq 1\right\}}{ P(z) \left\langle A^{1-z} X B^{z} v,  A^{1-z} X B^{z} v  \right\rangle_{\mathbb{R}}} dz.$$
We will now restrict our attention to one of the two lines of the boundary depending on which of the two parts of the separation measure $d$ is bigger: we assume w.l.o.g. 
$$ \min \left\{  \left|2 \log{ \lambda_i} - \log{ \mu_j} - \log{ \mu_k} \right|:\lambda_i \in \sigma(A), \mu_j, \mu_k \in \sigma(B)    \right\} \geq \frac{d}{2}$$
and focus on $z = 0 + it$. If this was not the case, we would get the inequality for the other expression and focus on $z = 1 + it$. Since
$$  \left|  \left\langle A^{1-z} X B^{z} v,  A^{1-z} X B^{2z} v  \right\rangle_{\mathbb{R}}  \right| \leq 1 \qquad \mbox{on the entire strip},$$
it suffices to prove for some $c^{(2)}_{n,r,d} > 0$
$$ \int_{z = 0 + it}{  P(z) \left\langle A^{1-z} X B^{z} v,  A^{1-z} X B^{2z} v  \right\rangle_{\mathbb{R}} dz} \leq \left(1-c^{(2)}_{n,r,d}\right) \int_{z = 0 + it}{  P(z) dz}.$$
An expansion of the function shows that we can write it as
$$\left\langle A^{1-z} X B^{z} v,  A^{1-z} X B^{2z} v  \right\rangle_{\mathbb{R}} = \sum_{k=1}^{n}{\lambda_k^{2-2it} \left(   \sum_{l=1}^{n} \mu_l^{it} \left\langle v, b_l \right\rangle   \left\langle X b_l, a_k \right\rangle  \right)^2  } = \sum_{j}{c_j e^{i d_j t}},$$
where 
$$c_j \in \mathbb{R} \qquad \mbox{and} \qquad  d_j \in \left\{ \log{\mu_i} + \log{\mu_j} -  2\log{\lambda_k}: 1\leq i,j,k \leq n\right\} \subset \mathbb{R}$$  
and therefore $|d_j| \geq d/2$. We can invoke Lemma 1 and guarantee that on an interval $[-\ell, \ell] \subset i\mathbb{R}$ centered around the origin (where $\ell$ only depends on $n,r,d$)
$$  \int_{- i \ell}^{i \ell}{  \left\langle A^{1-z} X B^{z} v,  A^{1-z} X B^{2z} v  \right\rangle_{\mathbb{R}} dz} \leq \ell.$$
The Poisson kernel $P(z)$ is symmetric around the $x-$axis, has its local maximum at $z = 0$, is monotonically decaying away from that maximum and is nonnegative everywhere.
Lemma 2 applied to $f = P$ and $g$ being the trigonometric function the implies
 $$ \int_{- i \ell}^{i \ell}{  P(z) \left\langle A^{1-z} X B^{z} v,  A^{1-z} X B^{2z} v  \right\rangle_{\mathbb{R}} dz} \leq  \int_{- \frac{3i \ell}{4}}^{ \frac{3 i \ell}{4}}{P(z) dz} - 2   \int_{\frac{3i \ell}{4}}^{i \ell}{P(z) dz}$$
and this gives the desired result.
\end{proof}

\section{Proof of Theorem 3}
\begin{proof}  The argument is a straightforward adaption of the existing argument. We normalize $\|AB\| = 1$ and consider the complex-valued map
$$ z \rightarrow \left\langle A^{z} B^{z} v,  A^{z} B^{z} v \right\rangle \qquad \mbox{on} \quad \left\{z \in \mathbb{C}: 0 \leq \Re z \leq 1\right\}.$$
Trivially
$$  \left|   \left\langle  A^{it} B^{it} v,  A^{it} B^{it} v \right\rangle \right| \leq 1$$
because all matrices are unitary and we have
$$  \left|   \left\langle   A^{1+it} B^{1+it} v,  A^{1+it} B^{1+it} v \right\rangle \right| \leq 1$$
because of the normalization $\|AB\| = 1$. If indeed
$$ \|A^sB^sv\| = 1 \qquad \mbox{for some}~0<s<1,$$
then the maximum principle implies  that 
$$ \left\langle  A^{z} B^{z} v,  A^{z} B^{z} v \right\rangle=1 \qquad \mbox{on} \quad \left\{z \in \mathbb{C}: 0 \leq \Re z \leq 1\right\}.$$
We analyze this equation for $z = 0 + it$ for $t \in \mathbb{R}$. Clearly,
$$
\| A^{it}  B^{ it} v\|^2 = \sum_{k=1}^{n}{\lambda_k^{2it} \left(   \sum_{l=1}^{n} \mu_l^{it} \left\langle v, b_l \right\rangle   \left\langle  b_l, a_k \right\rangle  \right)^2 } =1.
$$
This falls within the general setup and the result follows from a repetition of previous arguments.
\end{proof}

\section{Some Remarks}
\subsection{Approximation Theory} This section states the approximation problem for which refined results would imply improved quantitative results about the behavior of
the constants $c_{n,r,d}$ and $c_{n,r,d^*}$ (and would also extend to other stability results obtained via the method outlined above).
Consider the strip
$ \left\{z \in \mathbb{C}: 0 \leq \Re z \leq \pi\right\},$
where the constant $\pi$ is chosen to simplify notation. A solution of the Dirichlet problem on that strip is given as follows (see e.g. \cite{widder}): define
$$ P(x,y) = \frac{\sin{x}}{\cosh{y} - \cos{x}}.$$
This function is nonnegative, decays exponentially and satisfies
$$ \int_{-\infty}^{\infty}{P(x,y) dy} = 2(\pi - x) \qquad \mbox{for all}~ 0<x<2\pi.$$
A solution of the Dirichlet problem on the strip is given by
$$ u(x,y) = \frac{1}{2\pi} \int_{-\infty}^{\infty}{ P(x,t-y) u(0, t) dt} +  \frac{1}{2\pi} \int_{-\infty}^{\infty}{ P(\pi-x,t-t-y) u(\pi, t) dt}.$$
Fix now $ 0 < r < 1$, define the function   $ g(y) = P(\pi r, y)/(2\pi(1 - r))$ (this normalization turns $g$ into a probability distribution) and the function class
$$ \mathcal{F} = \left\{  \sum_{k=1}^{n}{  e^{- 2 a_k i t} \left( \sum_{l=1}^{n}{ c_{k,l} e^{b_l i t } } \right)^2}:  a_k, b_l, c_{k,l} \in \mathbb{R} \right\},$$
which contains trigonometric functions with real coefficients that have a special algebraic structure. $\mathcal{F}$ contains the constant function 1 and therefore
$$ \sup_{f \in \mathcal{F} \atop \|f\|_{L^{\infty}(\mathbb{R})} \leq 1} \int_{-\infty}^{\infty}{f(y)g(y) dy} = 1.$$
The question is now under which conditions on $a_k$ and $b_k$ it is possible to obtain improved results.

Lemma 2 shows that if we define $\mathcal{H} \subset \mathcal{F}$ via
$$ \mathcal{H} = \left\{ f \in \mathcal{F}: \min_{1 \leq i,j,k \leq n}{ |b_i + b_j - 2a_k| \geq \delta} \right\},$$
then
$$ \sup_{f \in \mathcal{H} \atop \|f\|_{L^{\infty}(\mathbb{R})} \leq 1} \int_{-\infty}^{\infty}{f(y)g(y) dy} = 1 - c_{n,r,d} < 1 = \int_{-\infty}^{\infty}{g(y) dy} .$$
One natural question is whether the same result holds in the larger space  $\mathcal{H} \subset \mathcal{G} \subset \mathcal{F}$ given via
$$ \mathcal{G} = \left\{ f \in \mathcal{F}: \min_{1 \leq i,j \leq n}{ |a_i - b_j| \geq \delta} \right\},$$
which would then yield stability version that only depend on the minimal difference of elements in $\sigma(A)$ and $\sigma(B)$,
which would naturally complement our characterization of equality.
\begin{center}
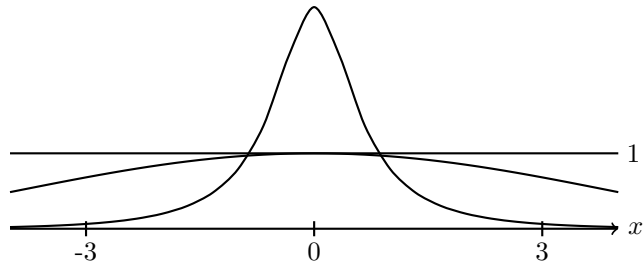
\begin{figure}[h!]
\begin{tikzpicture}
      \draw[->, thick] (-4,0) -- (4.,0) node[right] {$x$};
      \draw[thick, scale=1,domain=-4:4,smooth,variable=\x] plot ({\x},{ 0.588/(cosh(\x) - 0.8) });
\draw [thick] (0,-0.1) -- (0,0.1);
\node at (0, -0.3) {0};
\draw [thick] (3,-0.1) -- (3,0.1);
      \draw[scale=1,domain=-4:4, thick, smooth,variable=\x] plot ({\x},{ 1 });
\node at (4.2,1) {1};
\node at (3, -0.3) {3};
\draw [thick] (-3,-0.1) -- (-3,0.1);
\node at (-3, -0.3) {-3};
      \draw[thick, scale=1,domain=-4:4,smooth,variable=\x] plot ({\x},{ cos{deg(0.2*\x)}^2 });
    \end{tikzpicture}
\caption{An example for $r = 0.6$ and the function $\cos^2{(0.2x)} \in \mathcal{F}$.}
\end{figure}
\end{center}

\subsection{Simple Corollaries} A consequence of our equality characterization 
is that McIntosh inequalities with different indices are tightly linked. 

 \begin{corollary} Let $0<r<1$, $ \|A X\| = 1 = \|X B\|$, let $v \in \mathbb{R}^n$ be normalized $\|v\|=1$ and
assume $$  \|A^r XB^{1-r} v\| = 1.$$
Then for all $0 < s < 1$  $$  \|A^s XB^{1-s} v\| = 1.$$ 
\end{corollary}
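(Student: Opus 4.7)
The plan is to reuse the holomorphic function and the maximum principle argument from the proof of Theorem 1 essentially verbatim; the corollary amounts to reading off one more value of that function.

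Fix $v$ with $\|v\|=1$ and consider the map
$$ h(z) = \left\langle A^{1-z} X B^{z} v,\, A^{1-z} X B^{z} v \right\rangle_{\mathbb{R}} \qquad \text{on} \quad \{z \in \mathbb{C}: 0 \le \Re z \le 1\}.$$
As noted in the proof of Theorem~1, each coordinate of the vector $A^{1-z}XB^{z}v$ is an entire function of $z$, so $h$ is holomorphic on the strip, and the crude bound $|h(z)| \le \max(\|A\|,1)^2\|X\|^2 \max(\|B\|,1)^2$ makes $h$ uniformly bounded there. Using that $A^{it}$ and $B^{it}$ are unitary together with the normalization $\|AX\|=1=\|XB\|$, the same computation as in Theorem~1 gives $|h(z)| \le 1$ on the two boundary lines $\Re z = 0$ and $\Re z = 1$.

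Now I would exploit the hypothesis at the interior point $z_0 = 1-r$. Since $z_0$ is real, the vector $A^{1-z_0}XB^{z_0}v = A^{r}XB^{1-r}v$ has real entries, so
$$ h(1-r) = \|A^{r}XB^{1-r}v\|^2 = 1.$$
Thus $h$ is a bounded holomorphic function on the strip with $|h| \le 1$ on the boundary that attains the value $1$ at an interior point. The Phragmén--Lindelöf version of the maximum principle used in the proof of Theorem~1 forces $h \equiv 1$ on the entire strip.

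Finally, for any $0 < s < 1$, evaluate at the real point $z = 1-s$. Again $A^{1-z}XB^{z}v = A^{s}XB^{1-s}v$ has real entries, so $h(1-s) = \|A^{s}XB^{1-s}v\|^2$, and $h \equiv 1$ yields $\|A^{s}XB^{1-s}v\| = 1$ as claimed. No real obstacle arises: the whole content is that the maximum-principle step in the proof of Theorem~1 already produces the stronger statement $h \equiv 1$, from which every $s$ is immediate; the corollary is essentially just recording one additional substitution into an identity we have already proved.
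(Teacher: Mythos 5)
Your proposal is correct and is essentially the paper's own argument: the proof of Theorem 1 already shows, via the maximum principle, that $\left\langle A^{1-z} X B^{z} v, A^{1-z} X B^{z} v \right\rangle_{\mathbb{R}} \equiv 1$ on the whole strip whenever equality holds at $z=1-r$, and the corollary follows by evaluating this identity at the real point $z=1-s$, exactly as you do.
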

 The crucial ingredient in our argument was that whenever the equation is attained, then
$$ \left\langle A^{1-z} X B^{z} v,  A^{1-z} X B^{z} v  \right\rangle_{\mathbb{R}} =1  \qquad \mbox{for all} \quad \left\{z \in \mathbb{C}: 0 \leq \Re z \leq 1\right\}.$$
This immediately implies the statement. The same argument also holds for the Cordes inequality.

 \begin{corollary} Let $0<r<1$, $ \|A B\| = 1$, let $v \in \mathbb{R}^n$ be normalized $\|v\|=1$ and
assume $$  \|A^r B^{r} v\| = 1.$$
Then for all $0 < s < 1$  $$  \|A^s B^{s} v\| = 1.$$ 
\end{corollary}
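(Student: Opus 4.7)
The plan is to mimic exactly the argument used in the proof of Theorem 3, observing that the key intermediate fact established there already gives the desired rigidity. First I would introduce the complex-valued holomorphic function
$$ F(z) = \left\langle A^{z} B^{z} v,\, A^{z} B^{z} v \right\rangle_{\mathbb{R}} \qquad \text{on the strip} \quad \{z \in \mathbb{C} : 0 \leq \Re z \leq 1\}, $$
and verify (as in Section 4) that each coordinate of $A^{z} B^{z} v$ is an entire function of $z$, so $F$ is holomorphic on the strip. The normalization $\|AB\| = 1$ combined with the fact that $A^{it}$ and $B^{it}$ are unitary gives $|F(z)| \leq 1$ on each vertical boundary line $\Re z = 0$ and $\Re z = 1$, exactly as recorded in the proof of Theorem 3.

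Next I would apply the Phragmén–Lindelöf / maximum principle on the strip (using the trivial a priori bound $|F(z)| \leq \|A\|^{2\Re z}\|B\|^{2\Re z}\|v\|^2$ for uniform boundedness) to conclude $|F(z)| \leq 1$ on the whole strip. Now the hypothesis of the corollary says $F(r) = \|A^{r} B^{r} v\|^{2} = 1$, so $|F|$ attains its maximum at the interior point $z = r$. The maximum modulus principle then forces $F \equiv 1$ throughout the strip. This is precisely the intermediate conclusion already observed in the proof of Theorem 3.

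Restricting $F$ to the real segment $z = s \in (0,1)$, the matrix $A^{s} B^{s} v$ has real entries, so
$$ \|A^{s} B^{s} v\|^{2} = \left\langle A^{s} B^{s} v,\, A^{s} B^{s} v \right\rangle_{\mathbb{R}} = F(s) = 1 $$
for every $0 < s < 1$, which is the claimed identity. There is essentially no obstacle: the corollary is an immediate byproduct of the holomorphic extension and maximum-principle step that drives the proof of Theorem 3, and no new ingredient beyond that step is required.
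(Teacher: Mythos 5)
Your proof is correct and is exactly the paper's intended argument: the maximum-principle step from the proof of Theorem 3 gives $\left\langle A^{z}B^{z}v, A^{z}B^{z}v\right\rangle_{\mathbb{R}} \equiv 1$ on the strip, and evaluating at real $z=s$ yields the claim. No meaningful difference from the paper's route.
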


Another simple consequence of the proof is a slightly refined McIntosh inequality.

\begin{corollary} Let $A,B \in \mathbb{R}^{n \times n}$ be symmetric and positive definite and let $X  \in \mathbb{R}^{n \times n}$ be arbitrary. Then, for every $v \in \mathbb{R}^n$,
 $$\| A^rXB^{1-r}v\| \leq \sup_{t \in \mathbb{R}}{ \|A^{1+it}XB^{-it} v\|^r} \cdot \sup_{t \in \mathbb{R}}{ \|A^{it}XB^{1-it} v\|^{1-r}} \leq \|AX\|^r \|XB\|^{1-r}$$
\end{corollary}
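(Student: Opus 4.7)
The plan is to apply Hadamard's three-lines theorem to the holomorphic function
$$ F(z) = \left\langle A^{1-z} X B^{z} v,  A^{1-z} X B^{z} v  \right\rangle_{\mathbb{R}}$$
already used throughout the paper on the strip $\{0 \leq \Re z \leq 1\}$. Section 2 shows $F$ is holomorphic (every entry of $A^{1-z}XB^z v$ is a finite sum of complex exponentials in $z$) and bounded on the strip, so the three-lines theorem applies and gives
$$ |F(1-r)| \leq M_0^{\,r} \, M_1^{\,1-r}, \qquad M_0 := \sup_{t \in \mathbb{R}} |F(it)|, \quad M_1 := \sup_{t \in \mathbb{R}} |F(1+it)|.$$

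Next I would identify the three quantities. At $z = 1-r$ every matrix involved is real-valued, so $F(1-r) = \|A^r X B^{1-r} v\|^2$. On the boundary lines I use the Cauchy-Schwarz bound $|F(z)| \leq \|A^{1-z} X B^z v\|^2$ already established in Section 2.1 for the real bilinear pairing $\langle \cdot, \cdot\rangle_{\mathbb{R}}$. This gives $M_0 \leq (\sup_t \|A^{1-it} X B^{it} v\|)^2$ and $M_1 \leq (\sup_t \|A^{-it} X B^{1+it} v\|)^2$; sending $t \mapsto -t$ inside each supremum rewrites these bounds as the two suprema appearing in the statement. Taking square roots of the three-lines inequality then yields the first of the two claimed inequalities.

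For the second inequality I would invoke the unitarity of $A^{it}$ and $B^{it}$ proved in Section 2.1, together with the fact that $A$ commutes with $A^{it}$ and $B$ commutes with $B^{-it}$ on their common eigenbasis. A direct computation in the eigenbasis (or, equivalently, using $\|A^{1+it}w\| = \|Aw\|$ for every complex vector $w$) gives $\|A^{1+it} X B^{-it} v\| = \|A X B^{-it} v\| \leq \|AX\| \|B^{-it} v\| = \|AX\|\|v\|$, and symmetrically $\|A^{it} X B^{1-it} v\| = \|XB \cdot B^{-it} v\| \leq \|XB\|\|v\|$. Assuming the normalization $\|v\|=1$ (by homogeneity), this bounds the two suprema by $\|AX\|$ and $\|XB\|$, producing the outer McIntosh inequality and recovering the original result as a corollary.

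The only genuine step requiring care is the verification that Hadamard's theorem applies; this is essentially the boundedness bound
$\|A^{1-z} X B^z v\| \leq \max(\|A\|,1) \|X\| \max(\|B\|,1)$ of Section 2.1, but must be noted explicitly. Everything else is bookkeeping, and indeed the corollary can be viewed as a direct repackaging of the Hadamard step that already underlies Theorems 1 and 2; the refinement is interesting only because the intermediate suprema can be strictly smaller than $\|AX\|$ and $\|XB\|$, quantifying the slack in McIntosh's inequality.
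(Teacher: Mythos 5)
Your proposal is correct and follows essentially the paper's route: the paper's proof is exactly an application of the Hadamard three-line theorem to the holomorphic pairing $\left\langle A^{1-z}XB^{z}v, A^{1-z}XB^{z}v\right\rangle_{\mathbb{R}}$ from Section 2, with the boundary lines estimated by the real-bilinear Cauchy--Schwarz bound and the unitarity of $A^{it}$, $B^{it}$, just as you do. Your identification of $F(1-r)=\|A^{r}XB^{1-r}v\|^{2}$ (since the vector is real there) and the $t\mapsto -t$ relabeling are the right bookkeeping, and your remark about the implicit normalization $\|v\|=1$ for the outer inequality is a fair reading of the statement.
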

This follows immediately from an application of the Hadamard three-line theorem.\\

\textbf{Acknowledgement.} This paper grew naturally out of a long series of enjoyable discussions with Raphy Coifman for which the author is very grateful.

\end{document}